\newtheorem{Theorem}{Theorem}
\newtheorem{Lemma}{Lemma}
\newtheorem{corollary}{Corollary}
\begin{document}

\title{Infinite Size-Biased Orders}

\author{Alexander Gnedin \\ Queen Mary, University of London}

\maketitle

\begin{abstract}
\noindent
The infinite random size-biased order with arbitrary positive size parameters is introduced in terms of independent exponential random variables.
We collect basic properties and constructions of the order, some of which belong to the folklore, and show how the order  type (e.g. ${\mathbb Z}_{>0}, {\mathbb Q}$ or any other possible)
depends on parameters.
\end{abstract}

\section{Introduction}

Permutations of finite sets  are particularly exciting combinatorial structures due to interplay between their roles as bijections  and (linear) orders.
Though these are inseparable,  from the time of Cauchy the early literature 
was careful  to distinguish the term `permutation' from `substitution',  the latter being understood as the operation of replacing each element of the set by some other.
For infinite sets  the hypostases truly disentangle. 
The self-bijections  of ${\mathbb Z}_{>0}$, introduced by Vitali \cite{Vitali} as infinite substitutions,  
correspond to the orders obtained by re-arranging the positive integers in a sequence. In this paper under infinite permutation we shall mean this sort of order, corresponding to Vitali's substitution;
see \cite{FDF} for a wider use of the term.
But such infinite permutations constitute just one isomorphism type 
from  the continuum  universe   of  types of countable (infinite) orders \cite{Rosenstein}.
Recall that an order is {\it dense} if for any two distinct elements there is a third element strictly between them.
By Cantor's isomorphism theorem every countable dense order without  first and  last  elements is of the type ${\mathbb Q}$, while any other type can be realised 
as a suborder of the rationals.
For many reasons, however, it is more natural  to consider countable orders embedded in reals ,
though a dense order may be realised by a set not dense in the topological sense.

For random order chosen from some probability distribution over the space of orders on 
a given countable  set, the type 
may be a nontrivial random variable. We say that a random order is {\it pure} if  it has one type almost surely.
For instance, pure orders of   types ${\mathbb Z}_{>0}$ and ${\mathbb Z}$ have been
introduced recently as extensions of Mallows distribution on finite permutations \cite{GO1, GO2}.
To compare,
the analogue of finite uniformly distributed permutation is the {\it exchangeable} order, which has the type 
${\mathbb Q}$ and is derived  from the natural order on the values sampled in the i.i.d. fashion  
from a continuous distribution over reals.
The idea of the latter construction was just sketched in an abstract by  Rubin \cite{Rubin} and employed later as a model for the  `secretary' problem with infinitely many
choice options \cite{Gianini}.

We will focus on a rich infinite-parameter family of random orders whose type varies with parameters.
A good starting point is the following explicit construction in the spirit of finite `order statistics' model of ranking \cite{Ranking}.
Let $I$ be a countable   set of items equipped with a {\it size} function $w(i)>0$. For independent exponential random variables $X_i, i\in I,$ with $X_i$ having rate $w(i)$, define a random 
relation $\triangleleft$ on $I$ by setting
\begin{equation}\label{exp-r}
i\triangleleft j \Longleftrightarrow X_i<X_j,~~~i\neq j.
\end{equation}
With probability one the values of $X_i$'s are all pairwise distinct, therefore (\ref{exp-r}) defines a strict linear order,
 modulo a null event.
 We call $\triangleleft$ a {\it size-biased order} on $I$. The probability of  relation $i\triangleleft j$  is $w(i)/(w(i)+w(j))$, which exceeds $1/2$ whenever  $w(i)>w(j)$, so 
the intuitive effect of size-biasing is that the items of bigger size are more likely to precede those with smaller.

Finite  orders of the kind 
emerge from the statistical procedure of size-biased sampling without replacement from a  finite stratified population.
Yates and Grundy \cite{Yates} introduced the method as follows: select the first unit [stratum, item] with probability proportional to size, and the second with probability proportional to the size of the remaining units, and so on.  The list of size-biased picks makes up what is now called size-biased permutation.
Another eminent origin of the concept is the Luce model of choice \cite{Luce},  which postulates a condition on the probability that an item is recognised as
`the best' within a set of choosable alternatives.  
Yellott \cite{Yellott} explored the connection between Luce's axiomatic approach and the representation of choice 
via independent random variables from the Gumbel 
translation family (equivalent  to (\ref{exp-r})  by passing to
$-\log X_i$).
Fishburn \cite{Fishburn} studied conditions on random rankings compatible with Luce's axiom.
Steele \cite{Steele} identified all possible limits of empirical distributions contructed from the choice probabilities.
Many results related to the finite size-biased orders appeared (sometimes in disguise)  in the work  on nonuniform permutation models \cite{Plackett},   reliability \cite{Tikhov},  nonstationary models for records \cite{Borovkov,  Nevzorov, Pfeifer, Yang} and exponential order statistics \cite{Dansie,  Nagaraja, Nevzorov}. See \cite{Diaconis, Fill, PitmanTran} for different aspects and further pointers to the extensive literature.

For countable ground set $I$ and  summable size function, i.e.  satisfying $\sum_{i\in I}w(i)<\infty$,  constructing the order by size-biased sampling 
works without change. 
The term `size-biased permutation'  was first applied to this infinite case   in a study of ecological diversity \cite{Patil}. 
We can indeed speak of a permutation,
 because eventually every element gets sampled.
Another interpretation of $\triangleleft$  as `age ordering'
derives from sampling {\it with} replacement from the discrete probability distribution  with  masses 
$w(i)/\sum_{j\in I}w(j), ~i\in I$. 
By the i.i.d. sampling, where repetitions are inevitable, the order $\triangleleft$ corresponds to
 the random succession  of distinct items listed as they appear for the first time in the sample.
  The induced ordering
of $w(i)$'s (which need not be strict)  is also called size-biased;
this  offers a useful alternative to the arrangement of the collection of 
sizes by decrease, especially  in the analysis of discrete distributions with random masses such as the Poisson-Dirichlet random measures and their relatives \cite{Feng, G-SBP, PPY}.

The infinite exchangeable order appears in the special case of  constant size, e.g. $w(i)\equiv 1$.
This and other  size-biased orders with nonsummable size functions provide a natural framework to model processes of records.  
Suppose $I={\mathbb Z}_{>0}$, so the items are arranged in a sequence. In the event 
$A_i:=\{X_i=\min (X_1,\ldots, X_i)\}$ we speak of a record at index $i$.
The  setting (\ref{exp-r}) of exponential variables  is an instance of Nevzorov's model for records, which has a characteristic feature 
that the  $A_i$'s are independent   \cite{Borovkov, Nevzorov}
\footnote{Our records are {\it lower} (or minimal). To literally fit in Nevzorov's $F^\alpha$ scheme for {\it upper} records we need
to consider the negatives $-X_i$.}.
The summable case has only limited interest for the theory, because
the number of records 
is  then finite (see \cite{Arnold}, Section 6.3). 
In  the exchangeable case the infinitude of records is implied from
 ${\mathbb P}[A_i]=1/i$, and the situation is similar for 
record models with $w(i)=i^\alpha,$   $\alpha\geq -1$ found in Pfeifer \cite{Pfeifer}, as well as for more general regularly varying size functions considered by Steele \cite{Steele}.

The infinitude of records is an intrinsic property, which does not depend on a particular enumeration of items by ${\mathbb Z}_{>0}$, and is equivalent to 
existence of the first element in $\triangleleft$.
The dichotomy begs a deeper question about the {type} of an infinite size-biased order. 
For summable size function the type is ${\mathbb Z}_{>}$, while  in the exchangeable case it is ${\mathbb Q}$.
What are the other possibilities?
Our interest to the question stems from the Arratia, Barbour and Tavar{\'e} conjecture  (\cite{ABT} p. 38, further cited as ABT), which suggests   that
$\triangleleft$ has the type of some suborder in ${\mathbb Z}$
(which may be  ${\mathbb Z}_{<0},   {\mathbb Z}_{>0}$ or ${\mathbb Z}$),  provided
 the multiset of sizes  $(w(i), i\in I)$ has no accumulation points strictly inside $(0,\infty)$. 
A related question concerning existence of few {\it last} elements in the order was addressed in \cite{Chatterjee}.

The rest of the paper is organised as follows. In Section 2 we review constructions of size-biased orders and  their properties. A novelty here is  an insertion algorithm that 
outputs the order for arbitrary size function which need not be summable. In Section 3 we show that the ABT conjecture 
fails without additional conditions
and give a complete classification into possible types. 
Section 4 is devoted to examples.

\section{Constructions and properties}

\subsection{Distribution}

Fix countable $I$ equipped with a size function $w(i)>0$.
We may label $I$ by ${\mathbb Z}_{>0}$ where appropriate, but many properties we are interested in will not depend on such labelling.

The algebraic approach to  size-biased orders relies on  the
 homogeneous  rational functions
\begin{equation}\label{p_n}
p_n(x_1,\dots,x_n)=\prod_{k=1}^n\frac{x_k}{x_k+x_{k+1}+\dots+ x_n}\,\quad 
\end{equation}
of any arity $n\geq 1$.   The symmetrisation of $p_n$  is constant $1$.
Other useful identities is 
 a cycle reversion formula 
$$p_2(x_1,x_2)p_2(x_2,x_3)\cdots p_2(x_{n-1},x_n)=p_2(x_n,x_{n-1})\cdots p_2(x_3,x_2) p_2(x_2,x_1)$$
and a  transposition identity
\begin{eqnarray}\label{quot}
\frac{p_{n+m+2}(a_1\ldots,a_n,x,y, b_1,\ldots,b_m)}{p_{n+m+2}(a_1\ldots,a_n,y,x, b_1,\ldots,b_m)}= \frac{x+(b_1+\cdots+b_m)}{y+(b_1+\cdots+b_m)}.
\end{eqnarray}

The {\it chain} or {\it ranking}  probabilities are obtained by the specialisation of  indeterminates in (\ref{p_n}) as $x_k=w(i_k)$, that is
\begin{equation}\label{chain}
{\mathbb P}[i_1\triangleleft \cdots \triangleleft i_n]=p_n(w(i_1),\ldots,(w(i_n))
\end{equation}
for $\{i_1,\ldots,i_n\}\subset I$. This concluded  from (\ref{exp-r}) using the memoryless property of the exponential distribution.
Permutations of labels yield  probabilities of other chains of relations.

Formula (\ref{chain}) is sometimes adopted as  definition of  the size-biased order on a finite set $J=\{i_1,\ldots,i_n\}$. If $K\supset J$ is another finite set, then the restriction of the size-biased order from $K$ 
to $J$ has distribution (\ref{chain}), which can be verified from the identities like
$$p_2(x,y)=p_3(x,y,z)+p_3(x,z,y)+p_3(z,x,y).$$
In this sense the finite size-biased orders are {\it consistent}.

\paragraph{Intrinsic definition of the order.}
For infinite set, (\ref{chain})  assumes the role of `finite-dimensional distribution'. 
By consistency, the existence of a unique random order satisfying (\ref{chain})  follows by
Kolmogorov's measure extension theorem.
This approach may be regarded as an intrinsic definition of size-biased order, i.e.
not relying on (\ref{exp-r}) or other explicit construction. 
\vskip0.2cm

Many formulas involving $p_n$'s are most elementary shown from the representation (\ref{exp-r}). 
For instance, the recursion
\begin{eqnarray}\nonumber
p_{n+m}(x_1,\dots,x_n, y_1,\ldots,y_m)=& \\
p_{n+1}(x_1,\dots,x_n,  y_1+\cdots +y_m)&\!\!\!\!p_m(y_1,\dots,y_m)
 \label{bas-rec2}
\end{eqnarray}
follows by recognising  in  the left-hand side the ranking probability for 
$n+m$ (independent) exponentials, and in the right-hand side identifying the first factor with the ranking probability for $n+1$ exponentials of which the last is the minimum of $m$ other exponentials.
For $\{i_1,\ldots,i_n\}\cap J=\varnothing, |J|<\infty$ and  $w(J):=\sum_{j\in J}w(J),$ 
we have by (\ref{bas-rec2}) in obvious notation
\begin{equation}\label{head}
{\mathbb P}[i_1\triangleleft\cdots\triangleleft i_n\triangleleft J]=p_{n+1}(w(i_1),\ldots,w(i_n), w(J)).
\end{equation}

Further  features of the size-biased order obvious from (\ref{exp-r}) are
\begin{itemize}
\item[(i)] dissociation: restrictions of $\triangleleft$ to  disjoint $J$ and $K$ are independent, 
\item[(ii)] Luce's property: for $i\in J\subset K$ (finite sets), the probability that $i$ is a size-biased pick from $K$ (i.e. $\triangleleft$-first in $K$) is equal to the product of probability that the size-biased pick 
from $K$ falls in $J$ and the probability that a size-biased pick from $J$ is $i$.
\end{itemize}
The term `dissociation'  derives from the eponymous  property of the bivariate array of indicators $1_{\{i\triangleleft j\}}$,
which means independence of disjoint principal subarrays 
(cf \cite{Kallenberg} p. 339).
Proving the property  directly from (\ref{p_n}) is a nice exercise in algebra, which amounts to showing that
$$
\sum p_{n+m}(z_1,z_2,\ldots,z_{n+m})=p_n(x_1,\ldots,x_n)p_m(y_1,\ldots,y_m),
$$
where the sum is over all `shuffles' (as in Stanley \cite{Stanley} p. 482)
of $x_1,\ldots,x_n$ and $y_1,\ldots,y_m$ with variables from each block appearing in their original succession, for instance $x_1, x_2,y_1, x_3, y_2, y_3,\ldots,x_n$.

Luce's theory starts with a function representing the probability of recognising $i$ as `best' in context $J$; then with the analogue of (ii) postulated, one concludes that
the function is realisable as size-biased pick for some $w$ defines of sizes to items. See \cite{Steele} for a concise derivation.

Continuing the above list:
\begin{itemize}
\item[(iii)] for finite chains $\cdots\triangleleft i \triangleleft j \triangleleft\cdots$ and $\cdots\triangleleft j \triangleleft i \triangleleft\cdots$ that only differ by  transposition of adjacent terms, 
the first has higher probability if $w(i)>w(j)$,
\item[(iv)] for finite $J\subset I$, the most likely ordering is the one which has the items arranged by decreasing size, and the least likely by increasing. 
\end{itemize}
Property (iv) follows from (iii) by induction, and (iii) is clear from (\ref{quot}).

For {\it random} subsets of $I$ the size-biased ordering (permutation) is defined by conditioning.
Let $\tau_1,\ldots, \tau_n$ be a random sequence from $I$ with pairwise distinct elements. We say that the sequence is in size-biased order if
$${\mathbb P}[\tau_1=i_1,\ldots,\tau_n= i_n\,|\,\{\tau_1,\ldots,\tau_n\}=\{i_1,\ldots,i_n\}]={\mathbb P}[i_1\triangleleft\cdots\triangleleft i_n].$$
The property is preserved by  the prefix deletion: 
\begin{itemize}
\item[(v)] if $\tau_1,\ldots, \tau_n$ is in size-biased order, then $\tau_m,\ldots, \tau_n$ is in size-biased order too, for $1\leq m\leq n$,
\end{itemize}
as concluded from (\ref{bas-rec2}).

There is a minor paradox lurking here: 
typically 
a prefix of size-biased sequence is not in size-biased order, so the property fails under {\it suffix} deletion.
To illustrate this, let items $i,j,k$ have sizes $x\neq y, z$ then for their size-biased permutation $\tau_1,\tau_2,\tau_3$
formula (\ref{quot}) yields
$$\frac{{\mathbb P}[\tau_1=i,\tau_2=j]}{{\mathbb P}[\tau_1=j,\tau_2=i]}=\frac{x+z}{y+z}\neq \frac{x}{y}.$$
This has an explanation in terms of (\ref{exp-r}). For independent exponential variables $X_1, X_2, X_3$, given $X_3$ is the largest  the probability  of $X_1<X_2$ depends on the rate of $X_3$.

To extend the deletion theme, let $\tau_1,\ldots,\tau_n$ be a size-biased permutation of $J$. As follows from consistency, if an item is chosen independently
(from $\tau_1,\ldots,\tau_n$)
according to some distribution on $J$ and deleted from 
the permutation, then the resulting sequence is in size-biased order. In the case of uniform choice from $J$ the deletion is equivalent to removing
a term appearing in uniformly random place of the permutation.
Hence deleting $\tau_i$ with  uniformly chosen $i$ yields a size-biased permutation.
The latter implies  a property observed in \cite{PitmanTran} (Corollary 6) for certain random size functions.

\subsection{Construction from a Poisson scatter}

Suppose $I={\mathbb Z}_{>0}$. Independent exponential variables  can be realised as in the Ballerini-Resnick embedding of records in extremal process \cite{Ballerini}.
To that end, consider a unit rate Poisson point process in the domain $D=[0,\,\sum_i w(i))\times [0,\infty)$ of the $(t,x)$-plane.
Split $D$ in semi-infinite strips by vertical lines at points $w(1)+\cdots+w(i)$ and let $(T_i, X_i)$ be the height of the lowest Poisson atom in the $i$th strip.
Clearly, $X_i$ is $w(i)$-rate exponential, $T_i$ uniformly distributed between two division points, and 
all variables $T_i, X_i$ are jointly independent.

This realisation makes obvious many features of $\triangleleft$, in particular that the type of the order is ${\mathbb Z}_{>0}$ if and only if  $\sum_i w(i)<\infty$, in which case the overall lowest point 
in $D$ has the $t$-component uniform on $[0,\,\sum_i w(i))$, hence coinciding with $T_i$ with probability proportional to $w(i)$.

\subsection{Algorithms}

\paragraph{Infinite Lehmer codes.} For a sequence of integers $r_1, r_2,\ldots$ where $r_i$ takes values in $\{1,\ldots,i\}$, there is a unique  order
on ${\mathbb Z}_{>0}$ which places $i$ in position $r_i$ relative to $\{1,\ldots,i\}$. 
For instance $1,2,1,3,\ldots$ means that the order restricted to $\{1,2,3,4\}$ is the permutation $3,1,4,2$. 
The value $r_i=1$ may be interpreted as record in the sense of extreme-value theory,  or that $i$ is the best item among $\{1,\ldots,i\}$ in Luce's theory of choice.  
We shall call $r_i$ the {\it relative rank},
and, by analogy with finite permutations, call
 the whole sequence {\it Lehmer code}, since  $i-r_i$ measures the discordance between $i$ and its relative position in another order.
Note that $i-r_i+1, i\geq 1$ is the Lehmer code of the order reversal.

A random Lehmer code $R_1, R_2,\ldots$ corresponds to a random order on ${\mathbb Z}_{>0}$. 
For the exchangeable order, the relative ranks are independent, with $R_i$ uniform over $\{1,\ldots,i\}$.
For the infinite Mallows order \cite{GO1} with parameter $q>0$, the relative ranks are independent, with $i-R_i$ having truncated geometric distribution on $\{0,1,\ldots,i-1\}$ whose  weights 
are proportional to $1, q,\ldots,q^{i-1}.$

\paragraph{Uniform shuffles.}
Fisher and Yates \cite{Fisher} 
in the introduction to their statistical tables   suggested two algorithms 
to generate a uniform random permutation of $n$ items (Examples 12 and 12.2 in the 4th edition). Both algorithms start with
a source list $S$ with $n$ elements that are moved, one at a time, 
to a target list $T$ where the items  appear shuffled.  The  list $T$ is originally empty.

The algorithm now  known as the Fisher-Yates shuffle iterates the following elemental operation:
\begin{itemize}
\item[(aI)] pick uniformly at random an item from list $S$ and move it to the rear of list $T$.
\end{itemize}

The second algorithm  iterates another operation:  
\begin{itemize}
\item[(aII)] remove the front item from list $S$ and insert it in list $T$ in a uniformly random position. 
\end{itemize}
Note that when $T$ has $i-1$ elements there are $i$ gaps where the next  element from $S$ can be inserted.

Though perhaps less practical computationally, 
the second method has the advantage that it allows one 
to simulate the infinite exchangeable random order in terms of the Lehmer code
$R_1, R_2,\ldots$ be independent, with $R_i$  uniformly distributed on $\{1,\ldots,i\}$.
The variable $R_i$, called {\it relative rank}, is the position occupied by $i$ in the list $T$ when the item is inserted.
 Since there are infinitely many records, the list $T$ will not converge, coordinatewise, to some infinite permutation.

\paragraph{Size-biased shuffles.}
Suppose now that the list $S$ is infinite, with items  having positive sizes whose total is  finite.
The size-biased version of aI is the familiar size-biased pick whose iterates     
yield a size-biased permutation.
Nevertheless, it is instructive to represent a single size-biased pick as a cycle, 
where a pointer driven by a Bernoulli process
moves through  $S$. 
\begin{itemize}
\item[] A cycle begins with the pointer at the front element of $S$.
Each time the pointer is at an item with some size $x$ and let the total of the sizes strictly to the right of the pointer be $t$. With probability $t/(t+x)$ the pointer passes 
to the next item.
Otherwise the current item is placed to the rear of list $T$, and the pointer returns to the front of now reduced $S$ to start the next cycle.
\end{itemize}
Showing that a size-biased pick gets moved relies on
$$
\frac{t_1}{t_1+x_1}\cdots \frac{t_{k-1}}{t_{k-1}+x_{k-1}}\frac{x_k}{t_k+x_k}=\frac{x_k}{t_0},\quad{\rm where~~} t_{i-1}=t_i+x_i, ~i\geq 1.
$$

The analogue  of aII is the operation of size-biased insertion, which we introduce as a cycle of moves.
\begin{itemize}
\item[(aII-sb)] A cycle begins with the pointer in the leftmost gap of $T$.
Let $x$ be the size of the front item in $S$, fixed for the length of the cycle,
 and let $t$ be the variable sum of  sizes in $T$ to the right of the pointer. With probability $t/(t+x)$ the 
pointer moves to the next gap in $T$.
Otherwise the item is 
removed from $S$  and placed in the gap with the pointer, then the pointer returns to the leftmost gap  of $T$ to start the next cycle.
\end{itemize}
\noindent
For the $i$th cycle, if the front item in $S$ has size $x$ the conditional probability of $R_i=k, 1\leq k\leq i,$ given the sizes in $T$ are $x_1,\ldots,x_{i-1}$ is
\begin{equation}\label{quot1}
\left( \prod_{\ell=1}^{k-1} \frac{x_\ell+\dots+x_{i-1}}{x_\ell+\dots+x_{i-1}+x}\right)  \frac{x}{x_k+\dots+x_{i-1}+x}.
\end{equation}
To explain the insertion rule in terms of (\ref{exp-r}), we condition on the ranking event $X_1<\cdots<X_{i-1}$. Given that, the spacings can be represented as
$$X_1=\frac{Y_1}{x_1+\cdots+x_{i-1}}, X_2-X_1=\frac{Y_2}{x_2+\cdots+x_{i-1}}, X_{i-1}-X_{i-2}=\frac{Y_{i-1}}{x_{i-1}},$$
where $x_k=w(k)$ and $Y_1, Y_2,\ldots$ are independent standard exponential variables, see \cite{Nevzorov} p. 19.
Now for $X_i$ another independent exponential  variable with rate $x=w(i)$, the variable falls below $X_1$ with (conditional) probability $x/  (x_1+\cdots+x_{i-1}+x)$, and given this does not happen
falls between $X_1$ and $X_2$ with probability $x/  (x_2+\cdots+x_{i-1}+x)$, and so on.
The case of any other ranking of $X_1,\ldots, X_{i-1}$ is reduced to this by re-labelling.

After each insertion the list $T$ is in size-biased order by the virtue of (\ref{exp-r}). Confirming this algebraically means checking that (\ref{quot1}) coincides with the quotient
$$
\frac{p_i(x_1,\ldots,x_{k-1},x,x_{k+1},\ldots, x_{i-1})}{p_{i-1}(x_1,\ldots,x_{i-1})},
$$
which in turn determines the conditional distribution of $R_i$ given $R_1,\ldots, R_{i-1}$, hence the law of the infinite size-biased order expressed in terms of the relative ranks.

Although the relative ranks are not independent, the events $A_i=\{R_i=1\}$ are independent. 
This property characterises Nevzorov's model for records among a class of order statistics models \cite{Borovkov}.
By the virtue of ${\mathbb P}[A_i]=w(i)/(w(1)+\cdots+w(i))$ the number of records is infinite iff $\sum_i w(i)<\infty$,
iff $\triangleleft$ belongs to the type ${\mathbb Z}_{>0}$.

\paragraph{Time reversal of  the Tsetlin library.}

Consider size-biased version of the familiar  top-to-random shuffle. Let $S$ be a finite list.
In one step, the front element of $S$ is taken and size-biasedly inserted (i.e. using the iterates of (aII-sb)) back into $S$.
It is intuitively clear that the equilibrium of this Markov chain is the size-biased permutation of $S$.

Indeed, if $S$ is a size-biased permutation (of the set of elements of $S$) then removing the front element leaves the other elements in size-biased order,
and the subsequent insertion yields again a size-bised permutation.

In fact, the described process if the time reversal of Tsetlin library, as has been observed by Pitman and Yakubovich \cite{PitmanYak}.
The formula for transition probability of time-reversed Markov chain in equilibrium becomes
the obvious algebraic identity 
\begin{eqnarray*}
\frac{p_{n+1}(x_1,\ldots,x_k,y,x_{k+1},\ldots,x_n)}{p_{n+1}(y, x_1,\ldots,x_n)}\,p_2({y},{y+x_1+\cdots+x_n})=\\
\frac{p_{n+1}(x_1,\ldots,x_k,y,x_{k+1},\ldots,x_n)}{p_{n}(x_1,\ldots,x_n)}.
\end{eqnarray*}

\section{Classification in types}

We procede to the classification of size-biased orders.
The  (noncommutative) operation 
$\nearrow$ will be used to denote the type of an order obtained by putting one set on top of the other 
(the notation adopted in \cite{Rosenstein} is $+$).
 For instance, ${\mathbb Q}\nearrow{\mathbb Z}$ 
is the order type  of  $[{\mathbb Q}\cap (0,1)]\cup \{2^{k}+1: k\in {\mathbb Z}\}$.

Using the realisation (\ref{exp-r}), we may relate the  type of $\triangleleft$ to the topological properties of the random set of points
$\{X_1,X_2,\dots\}$.  Let $\mu$ be the mean measure of the set, which to a finite interval $(x,y)\subset(0,\infty)$,
assigns the mass
\begin{equation}\label{e-ser}
\mu(x,y)=\sum_{i=1}^\infty {\mathbb P}[X_i\in (x,y]] =\sum_{i=1}^\infty (e^{-w(i)x} - e^{-w(i)y}),
\end{equation}
equal to the expected number of exponential points falling in the interval. 
By the Borel-Cantelli lemma we have a dichotomy: the number of points in $(x,y]$ is almost surely finite if $\mu(x,y)<\infty$,
and almost surely infinite if $\mu(x,y)=\infty$.

The measure $\mu$ is absolutely continuous in the sense that it has a finite decreasing density
$$
\varphi(x):=\sum_{i\in I }w(i) e^{-w(i)x}, 
$$
for $x>\beta$, where $\beta\in [0,\infty]$ is the convergence abscissa of the series. 
Note that for finite interval $(x,y)\subset (0,\infty)$
$\mu(x,y)<\infty$ for $x>\beta$, while $\mu(x,y)=\infty$ for $x<\beta$.

In case $\beta=\infty$  the random set $\{X_1, X_2,\ldots\}$ is dense in ${\mathbb R}_{\geq0}$.  Moreover, in this case
by  Tsirelson's universality result \cite{Tsirelson} it is possible to define the $X_j$'s on some probability space together 
with  a sequence of  i.i.d. standard exponential variables 
  $(Y_1, Y_2,\ldots)$ in such a way  that the countable sets $\{X_1, X_2,\ldots\}$ and $\{Y_1, Y_2,\ldots\}$ coincided almost surely.

\paragraph{Case $\sum_i w(i)<\infty$.}
 We have seen already that  $\triangleleft$ is of the type ${\mathbb Z}_{>0}$. This is confirmed by arguing that
since $w(i)\to 0$ and  $e^{-x w(i)} - e^{-y w(i)}\sim w(i)(y-x)$, the series  (\ref{e-ser}) converges everywhere,
$\beta=0$ and $X_i\to\infty$ as $i\to\infty$.

\paragraph{Case  $w(i)\to 0$ and $\sum_i w(i)=\infty$.} The series
  (\ref{e-ser})  diverges everywhere, since
the generic term is asymptotic to $w(i)(y-x)$, hence $\triangleleft$ has the type
${\mathbb Q}$.  This instance disproves the ABT conjecture stated in Introduction.

\paragraph{Case $(w(i), i\in I)$ has accumulation point in $(0,\infty)$.}
The series (\ref{e-ser}) diverges everywhere, because infinitely many terms  $e^{-x w(i)} - e^{-y w(i)}$ are bounded away from $0$. 
Hence $\beta=\infty$,
$\{X_1, X_2,\ldots\}$ is dense in ${\mathbb R}_{>0}$. Thus
 $\triangleleft$ is a dense order of the type ${\mathbb Q}$.

\vskip0.2cm
The  picture is less obvious for  size functions with $w(i)\to\infty$, when large values $X_i$ are rare. To treat this class we need a lemma.
\begin{Lemma}\label{abscissa}
Suppose $w(i)\to\infty$, then $\beta$ is given by 
\begin{equation}\label{beta}
\beta=\limsup_{i\to\infty} \frac{\log i}{w(i)}.
\end{equation}
Thus the  series
{\rm   (\ref{e-ser}) } converges for  $x>\beta$ and diverges for $0\leq x<\beta$ regardless of $y\in (x,\infty]$ (so including $y=\infty$).
\end{Lemma}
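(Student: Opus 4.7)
My plan is to reduce the question to the pure series $\sum_i e^{-w(i)x}$ and then compute its abscissa in a Hardy--Riesz style. First relabel $I$ as ${\mathbb Z}_{>0}$ so that $w$ is non-decreasing; this is legitimate because every series under consideration has positive terms and is insensitive to rearrangement, so the convergence abscissa $\beta$ is unaffected.

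\emph{Reduction step.} I will show that the series $\varphi(x)=\sum_i w(i)e^{-w(i)x}$, the pure series $\sum_i e^{-w(i)x}$, and the series (\ref{e-ser}) for any $y\in(x,\infty]$ share a common abscissa of convergence. For the latter two, $(e^{-w(i)x}-e^{-w(i)y})/e^{-w(i)x}=1-e^{-w(i)(y-x)}\to 1$ as $w(i)\to\infty$, so they converge or diverge together. For $\varphi$ versus $\sum e^{-w(i)x}$, eventually $w(i)\ge 1$ gives $w(i)e^{-w(i)x}\ge e^{-w(i)x}$, while for any $\epsilon>0$ the bound $w(i)\le e^{\epsilon w(i)}$ (valid for large $i$) gives $w(i)e^{-w(i)x}\le e^{-w(i)(x-\epsilon)}$; letting $\epsilon\to 0$ equates the two abscissae.

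\emph{Computing the abscissa.} Write $\beta_*=\limsup_i\log i/w(i)$. For the upper bound $\beta\le\beta_*$, pick $x>x'>\beta_*$; eventually $\log i<x'w(i)$, so $e^{-w(i)x}<i^{-x/x'}$ with exponent $x/x'>1$, yielding a summable tail. For the lower bound $\beta\ge\beta_*$, take $x<x'<\beta_*$ and choose $i_k\to\infty$ with $w(i_k)<\log i_k/x'$, i.e.\ $i_k>e^{x'w(i_k)}$. Writing $t_k=w(i_k)$, monotonicity of $w$ gives $w(j)\le t_k$ for every $j\le i_k$, hence
\begin{equation*}
\sum_{j=1}^{i_k}e^{-w(j)x}\ \ge\ i_k\,e^{-t_k x}\ >\ e^{(x'-x)t_k}\longrightarrow\infty
\end{equation*}
since $t_k\to\infty$. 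The divergence transfers to $\varphi$ and (\ref{e-ser}) by the reduction step.

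\emph{Main obstacle.} The delicate point is the lower bound. The bare $\limsup$ condition only supplies a sparse subsequence of `bad' indices $i_k$ where $e^{-w(i_k)x}$ is large, which need not by itself produce a divergent sub-sum. What rescues the argument is the monotone arrangement: it lets a single bad index $i_k$ certify that an entire initial block of $i_k$ consecutive terms is bounded below by $e^{-t_k x}$, converting the sparse limsup into a genuine lower bound on the partial sums. The preliminary sorting of $w$ is therefore not cosmetic but essential; without it, one only obtains the correct abscissa by first interpreting $\limsup_i\log i/w(i)$ under the non-decreasing relabeling.
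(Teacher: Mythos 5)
Your proof is correct, and it follows the same two-step structure as the paper: first factor $e^{-xw(i)}-e^{-yw(i)}=e^{-xw(i)}\bigl(1-e^{-(y-x)w(i)}\bigr)$ to reduce (\ref{e-ser}) to the pure Dirichlet series $\sum_i e^{-xw(i)}$ (the paper does exactly this), and then identify the abscissa of that series. Where you diverge is in the second step: the paper cites the general Cahen formula for the abscissa of a Dirichlet series $\sum a_n e^{-\lambda_n s}$ (Berenstein--Gay), whereas you re-derive it from scratch with the $i^{-x/x'}$ comparison for the upper bound and the block-count lower bound for divergence. Both are fine; yours is self-contained, the paper's is shorter.

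The most valuable part of your write-up is the ``main obstacle'' paragraph, and it flags something the paper genuinely glosses over. The abscissa $\beta$ of $\sum_i e^{-xw(i)}$ is invariant under rearrangement of the (positive) terms, but the quantity $\limsup_i \log i/w(i)$ is \emph{not}: for a fixed multiset of sizes with $w(i)\to\infty$ one can choose a bijective relabelling pushing occasional sizes to astronomically large indices, which inflates the $\limsup$ without touching the series. (It can only be inflated, never deflated, since for any permutation $\sigma$ and any $n$ there is some $j\le n$ with $\sigma^{-1}(j)\ge n$, and monotone $\tilde w$ then gives $\log\sigma^{-1}(j)/\tilde w(j)\ge\log n/\tilde w(n)$.) So the formula (\ref{beta}) should indeed be read with $w$ non-decreasing, exactly as in the classical Dirichlet-series setting where $\lambda_n\uparrow\infty$ is part of the hypothesis; your preliminary sort is the correct way to state the lemma unambiguously, and your observation that the sort is what converts a sparse $\limsup$-subsequence into a divergent block of partial sums is precisely the mechanism behind Cahen's formula. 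One small redundancy: you also equate the abscissa of $\varphi(x)=\sum_i w(i)e^{-w(i)x}$ with the others; this is true and mirrors a remark in the surrounding text, but it is not needed for the lemma, whose conclusion concerns only (\ref{e-ser}).
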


\begin{proof} Factoring $e^{-x w(i)} - e^{-y w(i)}=e^{-xw(i)} (1-e^{-(y-x)w(i)})$ we see that 
$\mu(x,y)<\infty$  is equivalent  to  convergence of  the Dirichlet series 
\begin{equation}\label{e-ser-1}
\sum_{i=1}^\infty e^{-x w(i)}.
\end{equation}
Formula (\ref{beta}) is a specialisation of the general formula for the convergence abscissa of the series (cf. \cite{Berenstein} p.489). 
\end{proof}

\paragraph{Case $w(i)\to\infty$ and $0<\beta<\infty$.}
By the lemma, the closure of $\{X_i\}\cap (0,\beta)$ is $[0m\beta]$. Within $(\beta,\infty)$ the sequence $\{X_i\}$ has some finite random number of points
if (\ref{e-ser-1}) converges at $x=\beta$,  or has infinitely many points converging to $\beta$ if (\ref{e-ser-1}) diverges at $x=\beta$.
Accordingly, the type of $\triangleleft$ is either ${\mathbb Q}\nearrow {\mathbb F}$ with ${\mathbb F}$ being a finite ordered set of random cardinality,
or ${\mathbb Q}\nearrow {\mathbb Z}_{<0}$. 
In the former case the type is not pure since ${\mathbb F}$ may have any finite cardinality with positive probability.

\paragraph{Case $w(i)\to\infty$ and  $\beta=0$.}  We have then $X_i \to 0$, hence $\triangleleft$ has the order type ${\mathbb Z}_{< 0}$. 
In this case  $\triangleleft$ is representable as a  left-sided sequence.
The construction by insertion (aII-sb) will produce a list $T$ converging coordinatewise if its elements  are enumerated from the rear to front.

\paragraph{Case $w(i)\to\infty$ and $\beta=\infty$.} Then $\{X_i\}$ is dense in $(0,\infty)$, hence  the order type of $\triangleleft$ is ${\mathbb Q}$.

\vskip0.2cm

It remains to consider the combined cases where the collection of items can be split in two infinite sequences, say $(i_k': w(i_k')\leq 1)$
 and  $(i_k'': w(i_k'')>1 )$, 
such that 
$\sum w(i_k')<\infty$ and $w(i_k'')\to\infty$. 
For the subseries of (\ref{e-ser-1}) taken over $(i_k'') $    the convergence abscissa is $\beta''=\limsup_{k\to\infty} (\log i_k'')/w(i''_k)$ as in Lemma \ref{abscissa}.
Proceeding with these assumptions we distinguish the following possibilities.

\paragraph{Case $0<\beta''<\infty$.} If the  subseries converges at  $x=\beta''$  the type of $\triangleleft$ is ${\mathbb Q}\nearrow {\mathbb Z}_{> 0}$, otherwise it is   ${\mathbb Q}\nearrow {\mathbb Z}$.

\paragraph{Case $\beta''=0$.} Then $\{X_i\}$ has accumulation points $0$ and $\infty$, and $\triangleleft$ is of type $\mathbb Z$.

\paragraph{Case $\beta''=\infty$.} Then  $\triangleleft$ is of type $\mathbb Q$.

\vskip0.2cm
To summarise our findings, we have shown:

\begin{Theorem} The above classification of order types of $\triangleleft$ is complete.
\end{Theorem}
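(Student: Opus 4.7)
The plan is to verify exhaustiveness of the case list preceding the theorem; each individual case has already been analyzed and the order type justified, so what remains is to show that every size function $w$ on $I$ falls into at least one of the listed cases. I would organize this as a decision tree driven by the topological behavior of the multiset $(w(i),\, i\in I)$ in the compactification $[0,\infty]$.

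The first branching is whether $(w(i))$ has an accumulation point in the open interval $(0,\infty)$. If yes, the corresponding case applies and the type is $\mathbb{Q}$. Otherwise all accumulation points lie in $\{0,\infty\}$, and since $I$ is infinite at least one is genuine, yielding three mutually exclusive options: (a) $w(i)\to 0$, subdivided by finite- versus divergent-sum into the cases with types $\mathbb{Z}_{>0}$ and $\mathbb{Q}$; (b) $w(i)\to\infty$, subdivided by the value of $\beta$ via Lemma~\ref{abscissa}; (c) both $0$ and $\infty$ are accumulation points. In case~(c), choosing threshold $1$, both $I':=\{i:w(i)\leq 1\}$ and $I'':=\{i:w(i)>1\}$ are infinite, and a contradiction argument shows $w(i)\to 0$ along $I'$ and $w(i)\to\infty$ along $I''$, since any subsequence in $I'$ bounded away from $0$ (resp.\ in $I''$ bounded above) would contribute an accumulation point in $(0,\infty)$.

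Within~(c) a further split on whether $\sum_{i\in I'} w(i)<\infty$ is needed. If this sum is finite, the full mean-measure series~(\ref{e-ser}) has the same convergence abscissa $\beta''$ as its restriction to $I''$, and the three sub-sub-cases $\beta''\in(0,\infty)$, $\beta''=0$, $\beta''=\infty$ yield the composite types $\mathbb{Q}\nearrow\mathbb{Z}_{>0}$ (or $\mathbb{Q}\nearrow\mathbb{Z}$), $\mathbb{Q}\nearrow\mathbb{Z}$, and $\mathbb{Q}$ exactly as listed. If $\sum_{i\in I'}w(i)=\infty$, the estimate $e^{-xw(i)}-e^{-yw(i)}\sim(y-x)w(i)$ along $I'$ forces~(\ref{e-ser}) to diverge on every finite interval, so $\{X_i\}$ is dense in $(0,\infty)$ and the type is again $\mathbb{Q}$; no new type emerges.

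The main delicacy I anticipate is exactly this reconciliation of the borderline sub-case $\sum_{i\in I'}w(i)=\infty$ with the stated list. It is resolved via the Borel--Cantelli dichotomy underlying~(\ref{e-ser}), applied separately to the two subsequences along $I'$ and $I''$: density of $\{X_i:i\in I'\}$ in $(0,\infty)$ dominates the topological structure no matter what $\{X_i:i\in I''\}$ does, collapsing this sub-case into the $\mathbb{Q}$ type already present. Once these bookkeeping details are checked, the decision tree covers every size function, and completeness is established.
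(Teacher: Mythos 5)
Your proposal is essentially correct and takes the same route the paper implicitly takes: the Theorem is a summary of the case analysis, so the only thing to prove is exhaustiveness, which you organize (correctly) as a decision tree over the accumulation-point behaviour of $(w(i))$ in $[0,\infty]$. You rightly observe that when there is no accumulation point in $(0,\infty)$ the trichotomy $w(i)\to 0$, $w(i)\to\infty$, or both endpoints accumulate covers everything, and within each branch the further splits ($\sum w<\infty$ vs.\ $=\infty$; $\beta=0$, $\beta\in(0,\infty)$, $\beta=\infty$; $\beta''$ likewise) partition the remaining possibilities. A genuine and useful point is your treatment of the subcase (both endpoints accumulate, $\sum_{i\in I'}w(i)=\infty$), which the paper's explicit wording skips over: you correctly show via the estimate $e^{-xw(i)}-e^{-yw(i)}\sim(y-x)w(i)$ along $I'$ and the Borel--Cantelli dichotomy that $\{X_i\}$ is dense in $(0,\infty)$, so the type collapses to $\mathbb{Q}$ and no new entry is needed; this is exactly what the paper's Corollary~(b) reflects without spelling out.

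One small slip in transcription: for the combined case with $\sum_{I'}w<\infty$ and $\beta''=0$, the paper's type is $\mathbb{Z}$, not $\mathbb{Q}\nearrow\mathbb{Z}$ as you wrote. Indeed in that sub-sub-case $\{X_i:i\in I'\}$ accumulates only at $\infty$ (since $\sum_{I'}w<\infty$) and $\{X_i:i\in I''\}$ accumulates only at $0$ (since the Dirichlet series over $I''$ converges for every $x>0$), so the union is locally finite in $(0,\infty)$ with no minimum or maximum: a discrete $\mathbb{Z}$-type, with no dense $\mathbb{Q}$ block. This does not affect your exhaustiveness argument, which stands.
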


Regarding the ABT conjecture we obtain:

\begin{corollary} The size-biased order $\triangleleft$ can be embedded in ${\mathbb Z}$ if and only if  each of the following three conditions holds:
\begin{itemize}
\item[{\rm(a)}]
$(w(i), i\in I)$ has no accumulation points other than $0$ and $\infty$,
 \item[{\rm(b)}]
$\sum_{\{i: w(i)\leq 1\}} w(i)<\infty$,
\item[{\rm(c)}] if $\infty$ is an accumulation point then $\limsup_{\{i:w(i)>1\}} \frac{\log i}{w(i)}=0.$
\end{itemize}
\end{corollary}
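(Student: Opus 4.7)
The plan is to traverse the exhaustive list of order types furnished by Theorem~1 and single out exactly those regimes whose type is embeddable in $\mathbb Z$. Since $I$ is infinite, an order-preserving injection into $\mathbb Z$ forces the type of $\triangleleft$ to be one of $\mathbb Z_{>0}$, $\mathbb Z_{<0}$ or $\mathbb Z$: every strictly increasing chain in $\mathbb Z$ diverges to $+\infty$ and every strictly decreasing chain to $-\infty$, so no dense block $\mathbb Q$ can be embedded, and the mixed piece $\mathbb Z_{>0}\nearrow\mathbb Z_{<0}$ cannot be embedded either. In particular, every type in the classification of the form $\mathbb Q\nearrow(\cdot)$, or $\mathbb Q$ itself, or $\mathbb Q\nearrow\mathbb F$ with $\mathbb F$ a finite piece of random cardinality, is discarded.

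Inspecting what remains, embeddability in $\mathbb Z$ holds exactly in three regimes: (I) $\sum_i w(i)<\infty$, yielding type $\mathbb Z_{>0}$; (II) $w(i)\to\infty$ with $\beta=0$, yielding type $\mathbb Z_{<0}$; and (III) the split case in which $\sum_k w(i_k')<\infty$, $w(i_k'')\to\infty$, and $\beta''=0$, yielding type $\mathbb Z$. In (II) and (III) Lemma~1 identifies the vanishing of $\beta$, respectively $\beta''$, with the vanishing of $\limsup (\log i)/w(i)$ along the relevant sequence, which is the analytic content of condition~(c).

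For the ``only if'' direction I would verify (a)--(c) in each of (I)--(III). In (I), $w(i)\to 0$, so $0$ is the sole possible accumulation point; (b) is immediate from $\sum_i w(i)<\infty$; and (c) is vacuous since $\infty$ is not accumulated. In (II), $\infty$ is the only accumulation point, only finitely many indices satisfy $w(i)\le 1$ (so (b) is a finite sum), and (c) is the hypothesis $\beta=0$ rewritten via Lemma~1. In (III), the accumulation set is exactly $\{0,\infty\}$, (b) is the stipulated summability of the small piece, and (c) is the hypothesis $\beta''=0$ on the large piece.

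For the converse I would partition $I=J_1\sqcup J_2$ with $J_1=\{i:w(i)\le 1\}$ and $J_2=\{i:w(i)>1\}$. Condition (a) forces every accumulation point of $(w(i))$ to lie in $\{0,\infty\}$, so whenever $|J_2|=\infty$ the values along $J_2$ tend to $\infty$, and whenever $|J_1|=\infty$ the summability in (b) forces the values along $J_1$ to tend to $0$. A case split on $(|J_1|,|J_2|)$ then matches each possibility to one of the three regimes: $|J_2|<\infty$ together with (b) gives $\sum_i w(i)<\infty$, i.e.\ (I); $|J_1|<\infty$ gives $w(i)\to\infty$, and (c) combined with Lemma~1 yields $\beta=0$, i.e.\ (II); if both cardinalities are infinite, (b) gives the summability of the small piece while (c) applied via Lemma~1 to the subseries over $J_2$ yields $\beta''=0$, placing us in (III). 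The main obstacle is the bookkeeping: keeping straight when condition~(c) is genuinely used as opposed to being vacuously satisfied, and making sure that Lemma~1 is invoked with the correct enumeration of the subsequence $(i_k'')$ in the split regime.
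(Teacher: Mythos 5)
Your plan is correct and matches the paper's intent: the corollary is stated as a direct reading-off from the classification in Theorem~1, and your traversal of the cases, the identification of regimes (I)--(III) as exactly the ones embeddable in $\mathbb Z$, and the translation of $\beta=0$ (resp.\ $\beta''=0$) into condition~(c) via Lemma~1 is precisely the argument the paper leaves implicit. The case split on the cardinalities of $J_1=\{i:w(i)\le 1\}$ and $J_2=\{i:w(i)>1\}$ is the natural bookkeeping device and handles the vacuity of (c) correctly.
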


\section{Examples}

Throughout we assume $I={\mathbb Z}_{>0}$.

There are some natural choices for $w$ resulting in the order type  ${\mathbb Z}_{>0}$. 

\paragraph{Karamata-Stirling indicators.} For $\theta>0$ 
\begin{equation}\label{KS}
w(i)=\frac{(\theta)_{i-1}}{(i-1)!}, ~i\in{\mathbb Z}_{>0},
\end{equation}
is the only choice of sizes (with $w(1)=1$) leading to the record indicators with
$${\mathbb P}[A_i]=\frac{\theta}{\theta+i-1},$$
see \cite{GD}.
This profile of success probabilities plays important role in the study of random partitions and other combinatorial structures \cite{CSP}.
The size-biased order type is $\mathbb Q$.

\paragraph{Regular varying size functions.}

Steele \cite{Steele} showed that if the distributions
$$
F_n(t)=\sum_{i\leq nt} \frac{w(i)}{w(1)+\cdots+w(n)}, ~~~t\in [0,1],
$$
weakly converge, then the limit must be a beta distribution $F(t)=t^\theta$ for some $\theta\in[0,\infty]$ (where the limits are $\delta_0,\delta_1$ in the edge cases).
Convergence of $F_n(t_0)$ at a single point $t_0\in(0,1)$ suffices for this.
The limit appears if $w$ is regularly varying as $i\to\infty$ with index $\theta-1$, and if in addition $w$ is monotone then $i\, {\mathbb P}[A_i]\to \theta$.

Recall that for $0<\theta<\infty$ the regular variation amounts to the asymptotics $w(i)\sim i^\theta L(i), ~i\to\infty,$ with some function $L$ of slow variation.
The order $\triangleleft$ is of the type $\mathbb Q$.
Clearly, (\ref{KS}) is a special case.

In the case $\theta=0$ (slow variation) there is some diversity of types, in particular:
\begin{enumerate}
\item For $w(i)=(\log (i+1))^{1/2}$ we have $\beta=\infty$ hence the order type  is ${\mathbb Q}$.
\item For $w(i)=(\log (i+1))^2$ we have $\beta=0$. The order the  type is ${\mathbb Z}_{< 0}$.
 \item For $w(i)=\log i$ the series (\ref{e-ser-1})  is  the Riemann zeta function, diverging for $x\leq1$.
The order  has the combined type ${\mathbb Q}\nearrow{\mathbb Z}_{< 0}$.
\item For $w(i)=\log (i+1)+ 2 \log \log (i+1)$ the series (\ref{e-ser}) converges for $x\geq \beta=1$. The order $\Sigma$ is isomorphic to  
${\mathbb Q}\nearrow  {\mathbb F}$, where $\mathbb F$ is a finite order of random cardinality. 
This example was observed in \cite{Diaconis} as a situations where (in our terms) the order is not of the form $\cdots\nearrow{\mathbb Z}_{<0}$,
that is the reversal of the insertion list $T$ does not converge.
\end{enumerate}

If $w$ varies regularly with index $\theta-1<-1$, the size function is summable, hence the order type is ${\mathbb Z}_{>0}$.

\paragraph{Geometric size function.}
Consider the size-biased order $\triangleleft$ with the geometric size function $w(i)=q^{i}$, $q>0$.
The case  $q>1$ was studied by Yang \cite{Yang} as a model for records in exponentially growing population, proposed to explain the  pattern of Olympic records
that get broken more often than the i.i.d. theory predicts.

The size-biased order shares some features with the Mallows order. Recall that the Mallows order has independent relative ranks with 
$i-R_i$ truncated geometric.
These common features are:
\begin{itemize}
\item[(A)] the restrictions of $\triangleleft$ to $\{1,\ldots,i\}$ and $\{i+1, i+2,\ldots\}$ are independent,
\item[(B)] the restriction of $\triangleleft$ on $\{i+1, i+2,\ldots\}$ under the shift $i+k\mapsto k$ is pushforwarded to a distributional copy of $\triangleleft$, 
\item[(C)] the order type is ${\mathbb Z}_{>0}$ for  $q<1$, and  ${\mathbb Z}_{<0}$ for $q>1$.
\end{itemize}

\noindent
These deserve some comments.  Property (A) for size-biased order is an instance of dissociation,
 which does not hold for 
 Mallows permutation  (as one checks from restrictions on $\{1,3\}$ and $\{2,4\}$).
Property (B) follows from the homogeneity identities like $p_n(q^{i+1},\ldots,q^{i+n})=p_n(1,q,\ldots,q^{n-1})$.
Note also that if $\tau_1,\ldots,\tau_n$ is a size-biased permutation of $\{1,\ldots,n\}$ with parameter $q$ then $n-\tau_1,\ldots,n-\tau_n$ has the same distribution as the size-biased permutation
with parameter $q^{-1}$; the same property is valid for the Mallows permutation.

Suppose $0<q<1$. If $j\triangleleft j$ for $i<j$ we speak on an inversion. Denote $D_n$ the number of inversions within $\{1,\ldots,n\}$, this is `Kendall's tau' measuring the distance of $\triangleleft$ from 
the standard order. In terms of the Lehmer code, $D_n=\sum_{i=1}^n (i-R_i)$.

For items  $i<j$ an inversion occurs with probability  $q^j/(q^j+q^i)$, thus for the expected number of inversions we obtain
$$
{\mathbb E}[D_n]=\sum_{(i,j):1\leq i<j\leq n} \frac{q^j}{q^j+q^i}=\sum_{k=1}^{n-1}\frac{n-k}{1+q^{-k}},
$$
and for $n\to\infty$ the asymptotics becomes
\begin{equation}\label{asymp-inv}
{\mathbb E}[D_n]\sim c_q\,n\,,~{\rm with}~c_q=     \sum_{k=1}^\infty    \left(\frac{1}{1+q^{-k}}\right).
\end{equation}

Moreover, we even have the asymptotics for $D_n$ in a strong sense:
$$D_n\sim c_q n~~~{\rm a.s.}$$
Indeed, for two disjoint integer intervals $[a,b]$ and $[c,d]$ with $b-a=d-c$, the number of inversions of the restrictions of $\triangleleft$ are independent and have same distributions as $D_{b-a}$ and $D_{d-c}$,
respectively, by properties (A) and (B). 
On the other hand, the number of inversions for union of two disjoint sets in not smaller than the sum of the inversion counts for each of the sets.
With account of the already established asymptotics of the mean (\ref{asymp-inv}) the result follows now by applying  the i.i.d. version of Kingman's subadditive ergodic theorem (see, e.g., \cite{Romik}, Theorem A.3).

To compare, the number of inversions on $\{1,\ldots,n\}$ for the Mallows permutation is asymptotic to $(q^{-1}-1)^{-1} n$, as follows from the distribution of relative ranks.

\paragraph{Acknowledgement.}
I am indebted to Jim Pitman for useful comments and drawing my attention to paper \cite{PitmanYak}.

\end{document}